\title[Uniform random variables and Eulerian numbers]
{
Better-than-average uniform random variables and Eulerian numbers, or:
How many candidates should a voter approve?
}
\date{February 20, 2024}
\author{Svante Janson}
\thanks{Supported by the Knut and Alice Wallenberg Foundation}
\address{Department of Mathematics, Uppsala University, PO Box 480,
SE-751~06 Uppsala, Sweden}
\email{svante.janson@math.uu.se}
\newcommand\urladdrx[1]{{\urladdr{\def~{{\tiny$\sim$}}#1}}}
\author{Warren D. Smith}
\email{warren.wds@gmail.com}
\subjclass[2020]{60C05, 05A15, 91B12} 
\numberwithin{equation}{section}
\renewcommand\le{\leqslant}
\renewcommand\ge{\geqslant}
\theoremstyle{plain}
\newtheorem{theorem}{Theorem}[section]
\newtheorem{lemma}[theorem]{Lemma}
\theoremstyle{definition}
\newcommand\xqed[1]{%
    \leavevmode\unskip\penalty9999 \hbox{}\nobreak\hfill
    \quad\hbox{#1}}
\newtheorem{exampleqqq}[theorem]{Example}
\newenvironment{example}{\begin{exampleqqq}}
  {\xqed{\qedsymbol}\end{exampleqqq}}
\newtheorem{remarkqqq}[theorem]{Remark}
\newenvironment{remark}{\begin{remarkqqq}}
  {\xqed{\qedsymbol}\end{remarkqqq}}
\newtheorem*{ack}{Acknowledgement}
\theoremstyle{remark}
\newenvironment{romenumerate}[1][-10pt]{
\addtolength{\leftmargini}{#1}\begin{enumerate}
 }{\end{enumerate}}
\newcounter{oldenumi}
{\setcounter{oldenumi}{\value{enumi}}
\begin{romenumerate} \setcounter{enumi}{\value{oldenumi}}}
{\end{romenumerate}}
\newcounter{thmenumerate}
\newcounter{xenumerate}   
\newcommand{\refT}[1]{Theorem~\ref{#1}}
\newcommand{\refL}[1]{Lemma~\ref{#1}}
\newcommand{\refR}[1]{Remark~\ref{#1}}
\newcommand{\refS}[1]{Section~\ref{#1}}
\xdef\klockan{\the\count1.0\the\count255}
\xdef\klockan{\the\count1.\the\count255}\fi
\newcommand{\sumin}{\sum_{i=1}^n}
\newcommand\set[1]{\ensuremath{\{#1\}}}
\newcommand\biggset[1]{\ensuremath{\biggl\{#1\biggr\}}}
\newcommand\xpar[1]{(#1)}
\newcommand\bigpar[1]{\bigl(#1\bigr)}
\newcommand\Bigpar[1]{\Bigl(#1\Bigr)}
\newcommand\lrpar[1]{\left(#1\right)}
\newcommand\cpar[1]{\{#1\}}
\def\rompar(#1){\textup(#1\textup)}    
\def\xexp(#1){e^{#1}}
\newcommand\ceil[1]{\lceil#1\rceil}
\newcommand\floor[1]{\lfloor#1\rfloor}
\newcommand\ntoo{\ensuremath{{n\to\infty}}}
\newcommand\punkt{\xperiod}    
\newcommand\iid{i.i.d\punkt}    
\newcommand\eg{e.g\punkt}
\newcommand{\as}{a.s\punkt}
\newcommand\ii{\mathrm{i}}
\newcommand\eqd{\overset{\mathrm{d}}{=}}
\newcommand\bbR{\mathbb R}
\newcommand\bbC{\mathbb C}
\newcommand\bbZ{\mathbb Z}
\newcounter{CC}
\newcounter{cc}
\renewcommand\Re{\operatorname{Re}}
\renewcommand\Im{\operatorname{Im}}
\newcommand\E{\operatorname{\mathbb E}{}} 
\renewcommand\P{\operatorname{\mathbb P{}}}
\newcommand\Var{\operatorname{Var}}
\newcommand\Cov{\operatorname{Cov}}
\newcommand\Corr{\operatorname{Corr}}
\newcommand\Res{\operatorname{Res}}
\newcommand\ga{\alpha}
\newcommand\gD{\Delta}
\newcommand\gf{\varphi}
\renewcommand\phi{\xxx}  
\newcommand\cV{\mathcal V}
\newcommand\indic[1]{\boldsymbol1_{\cpar{#1}}} 
\newcommand\qww{^{-2}}
\newcommand\intoo{\int_0^\infty}
\newcommand\intoooo{\int_{-\infty}^\infty}
\newcommand\oi{\ensuremath{[0,1]}}
\newcommand\ooo{[0,\infty)}
\newcommand\dd{\,\mathrm{d}}
\newcommand{\mgf}{moment generating function}
\newcommand{\chf}{characteristic function}
\newcommand\rhs{right-hand side}
\newcommand\Uoi{U(0,1)}
\newcommand\xx[1]{_{(#1)}}
\newcommand\bU{\overline{U}}
\newcommand\fSni{\floor{S_{n-1}}}
\newcommand\cSni{\ceil{S_{n-1}}}
\newcommand{\euler}[2]{\genfrac{ < }{ > }{0pt}{}{#1}{#2}}
\newcommand\WW{\widehat W}
\newcommand\WWW{W^*}
\newcommand\arcsec{\operatorname{arcsec}}
\begin{document}

\begin{abstract} 
Consider $n$ independent random numbers with a uniform distribution on
$[0,1]$.
The number of them that exceed their mean is shown to have an Eulerian
distribution, i.e., it is described by the Eulerian numbers.
This is related to, but distinct from, the well known fact that the 
integer part of the sum of independent random numbers uniform on $[0,1]$
has an Eulerian distribution. 
One motivation for this problem comes from voting theory.
\end{abstract}

\maketitle

\section{Introduction}\label{S:intro}

Consider $n\ge2$ independent 
random numbers $U_1,\dots,U_n$, each with a uniform distribution on $\oi$.
Let $S_n:=\sumin U_i$ be their sum, and $\bU:=S_n/n$ their mean.
Finally, let
\begin{align}\label{wn}
W_n:=\#\set{i\le n: U_i > \bU},   
\end{align}
i.e, the number of the numbers $U_i$ that exceed their mean.
Since we assume $n\ge2$, a.s.\ (almost surely, i.e, with probability 1),
the numbers $U_1,\dots,U_n$ are distinct;
thus $\bU$ is strictly between the smallest and the largest $U_i$, and
consequently $1\le W_n\le n-1$.
What is the distribution of the random variable $W_n$?

Our original motivation for this question comes from voting theory, and is
briefly described in \refS{Svoting}.

Our main purpose is to prove the following theorem,
which shows that the distribution of $W_n$ 
is described by the \emph{Eulerian numbers} $\euler mk$
(see \refS{Sprel} for definitions and references);
we thus may say that $W_n$ has an \emph{Eulerian distribution}.
The proof is in \refS{Spf}.

\begin{theorem}\label{T1}
Define $W_n$ by \eqref{wn}, for some $n\ge2$.
Then, for every $k\ge0$,
\begin{align}\label{t1}
  \P(W_n=k)=\frac{1}{(n-1)!}\euler{n-1}{k-1}
.\end{align}
\end{theorem}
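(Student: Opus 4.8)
The plan is to reduce the scale-invariant event $\{W_n=k\}$ to a comparison of two independent sums of exponentials, and then to identify the resulting probability with the Eulerian numbers.

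First I would rewrite the event in terms of order statistics. Writing $U_{(1)}<\dots<U_{(n)}$ for the (a.s.\ distinct) order statistics, one has $\{W_n\le k\}=\{\bU\ge U_{(n-k)}\}$, since at most $k$ of the $U_i$ exceed $\bU$ exactly when the $(n-k)$-th smallest value lies weakly below the mean. Expanding $n\bU=\sum_i U_{(i)}$ and writing each order statistic as a partial sum of the spacings $D_l:=U_{(l)}-U_{(l-1)}$ (with $U_{(0)}:=0$), the inequality $n\bU\ge nU_{(n-k)}$ becomes, after collecting coefficients, the linear condition
\begin{equation*}
\sum_{l=n-k+1}^{n}(n+1-l)\,D_l \;\ge\; \sum_{l=2}^{n-k}(l-1)\,D_l .
\end{equation*}
Here $D_1$ has coefficient $0$ and the spacing $D_{n+1}=1-U_{(n)}$ does not occur, so exactly $n-1$ spacings are involved, with positive-side coefficients $1,\dots,k$ and negative-side coefficients $1,\dots,n-1-k$.

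Next I would invoke the R\'enyi representation of uniform spacings, $(D_1,\dots,D_{n+1})\eqd (E_1,\dots,E_{n+1})/\sum_l E_l$ for \iid\ $E_l\sim\Exp(1)$. Since the displayed inequality is homogeneous of degree $1$ in the $D_l$, the normalising factor $\sum_l E_l$ cancels and $E_1,E_{n+1}$ drop out, leaving
\begin{equation*}
\P(W_n\le k)=\P\Bigl(\sum_{j=1}^{k} j\,X_j \;\ge\; \sum_{i=1}^{n-1-k} i\,Y_i\Bigr),
\end{equation*}
where all the $X_j$ and $Y_i$ are \iid\ $\Exp(1)$. One checks this against small cases: for $n=4$, $k=1$ the right-hand side is $\P(X_1\ge Y_1+2Y_2)=\tfrac12\cdot\tfrac13=\tfrac16$, matching $\frac{1}{3!}\euler{3}{0}$.

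Finally I would evaluate this probability. Each weighted summand $jX_j$ is exponential with mean $j$, so the two sides are independent hypoexponential variables; writing the survival function $\bar F$ of the left-hand sum and the density $f$ of the right-hand sum by partial fractions and integrating $\int_0^\infty f\,\bar F$ yields an explicit finite double sum. The main obstacle is to recognise this sum as $\frac{1}{(n-1)!}\sum_{l=0}^{k-1}\euler{n-1}{l}$, which I would do via a standard Eulerian identity such as Worpitzky's; taking the difference $\P(W_n\le k)-\P(W_n\le k-1)$ then gives \eqref{t1}. Alternatively, and more in the spirit of \refS{Sprel}, one recognises the right-hand side above as $\P(\lfloor S_{n-1}\rfloor\le k-1)$, thereby establishing the distributional identity $W_n\eqd\lfloor S_{n-1}\rfloor+1$ and reducing \eqref{t1} to the classical Eulerian distribution of $\lfloor S_{n-1}\rfloor$.
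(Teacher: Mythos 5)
Your argument is correct, and it coincides in substance with the paper's \emph{alternative} proof of \refT{T1} rather than with its primary one. Your first two steps (rewriting $\{W_n\le k\}$ as $\{\bU\ge U\xx{n-k}\}$, expanding in spacings to get coefficients $1,\dots,k$ against $1,\dots,n-1-k$) are exactly the content of \refL{Lgen}, and your use of the R\'enyi representation to strip off the normalising factor reproduces \eqref{w1}. Two remarks. First, the paper's main proof takes a different and computation-free route from \refL{Lgen}: it observes that the analogous spacing condition for $\lceil S_{n-1}\rceil\le k$ (obtained from the self-similarity $(U\xx1/U\xx n,\dots,U\xx{n-1}/U\xx n)\eqd$ ordered uniforms) is a cyclic permutation of the condition for $W_n\le k$, and concludes $W_n\eqd\fSni+1$ directly from exchangeability of the spacings $(\gD_0,\dots,\gD_{n-1})$; this is your closing ``alternatively, one recognises\dots'' made precise, and it avoids all partial-fraction work. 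Second, the one under-specified point in your write-up is the final evaluation: integrating $\int_0^\infty f\bar F$ with \emph{separate} partial-fraction expansions of the two hypoexponential laws produces a genuine double sum $\sum_{m,m'}A_mB_{m'}\,m/(m+m')$, and collapsing that to $\frac{1}{(n-1)!}\sum_{i=0}^{k-1}(-1)^i\binom{n-1}{i}(k-i)^{n-1}$ (whence Worpitzky gives the Eulerian partial sum) is the ``main obstacle'' you flag but do not resolve. The paper sidesteps it with \refL{LP}, a single residue computation on the \mgf{} $\prod_i(1-a_iz)^{-1}\prod_j(1+b_jz)^{-1}$ that yields a single sum over the poles $1/a_m$ directly; you would be well advised to organise your integral the same way, after which the identification with the Eulerian numbers in \eqref{w2}--\eqref{w3} is a standard telescoping.
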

The Eulerian number in \eqref{t1} is non-zero if and only if
$1\le k\le n-1$, matching the observation above that $1\le W_n\le n-1$
a.s.

\begin{remark}\label{R<>}
Since we assume $n\ge2$, we have \as{}
 $U_i\neq\bU$ for every $i$; 
thus it does not make any difference if we replace 
``$>$'' in the definition \eqref{wn} by ``$\ge$''.
Furthermore,  by symmetry, we obtain the same distribution if we replace 
``$>$'' by ``$<$''  or ``$\le$''.
Also, of course instead of "uniform on $[0,1]$" for \eqref{wn} we could
have made the summands be uniform on any particular real interval $[a,b]$
with $a<b$.
\end{remark}

\refT{T1}
seems to be new, but we recall the following well known
result:

\begin{theorem}\label{T2}
  The integer part, i.e.\ floor, of the sum $S_n=\sumin U_i$ 
defined above has the Eulerian distribution
  \begin{align}\label{t2}
    \P\bigpar{\floor{S_n}=k}=\frac{1}{n!}\euler nk,
\qquad k\ge0.
  \end{align}
\end{theorem}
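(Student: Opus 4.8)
The plan is to reduce $\floor{S_n}$ to a \emph{descent count} on a sequence of \iid{} uniform variables, for which the Eulerian distribution is essentially the definition. Set $S_0:=0$ and, for $1\le i\le n$, let $T_i:=\frax{S_i}$ be the fractional part of the $i$-th partial sum, with $T_0:=0$. Since $S_i=S_{i-1}+U_i$ and $0\le U_i<1$, the floor increases by one precisely when a ``carry'' occurs; writing $S_i=\floor{S_{i-1}}+T_{i-1}+U_i$ with $T_{i-1}+U_i\in[0,2)$ gives $\floor{S_i}-\floor{S_{i-1}}=\indic{T_{i-1}+U_i\ge1}=\indic{T_{i-1}>T_i}$, because $T_i=\frax{T_{i-1}+U_i}$ drops below $T_{i-1}$ exactly when that carry happens. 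Telescoping,
\begin{align}
  \floor{S_n}=\sum_{i=1}^n\bigpar{\floor{S_i}-\floor{S_{i-1}}}
  =\#\set{1\le i\le n:T_{i-1}>T_i},
\end{align}
so $\floor{S_n}$ counts the descents in the sequence $(T_0,T_1,\dots,T_n)$.

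The key step is the lemma that $T_1,\dots,T_n$ are themselves \iid{} and uniform on $\oio$. First I would record the elementary ``rotation mod $1$'' fact: if $U\sim\Uoi$ is independent of a random variable $Y$, then $\frax{Y+U}$ is uniform on $\oio$ and independent of $Y$ (conditioning on $Y=y$, the map $u\mapsto\frax{y+u}$ is measure preserving on $\oio$, so the conditional law of $\frax{Y+U}$ is uniform regardless of $y$). Applying this along the natural filtration, using $T_i=\frax{T_{i-1}+U_i}$ together with the independence of $U_i$ from $(T_1,\dots,T_{i-1})$, an induction shows that each $T_i$ is uniform on $\oio$ and independent of $(T_1,\dots,T_{i-1})$, which yields the claim. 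This measure-preservation step is the crux of the argument; the rest is bookkeeping.

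Finally I would conclude. Almost surely $T_0=0<T_1$, so the comparison at $i=1$ is never a descent, and hence
\begin{align}
  \floor{S_n}=\#\set{1\le j\le n-1:T_j>T_{j+1}},
\end{align}
the number of descents of the \iid{} uniform sequence $(T_1,\dots,T_n)$. Since these variables are a.s.\ distinct, their relative order is a uniformly random permutation of $\setn$, and a descent $T_j>T_{j+1}$ is exactly a descent of that permutation. By the definition of the Eulerian numbers (see \refS{Sprel}), the number of permutations of $\setn$ with exactly $k$ descents is $\euler nk$, so $\P(\floor{S_n}=k)=\euler nk/n!$, which is \eqref{t2}. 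The nonzero range $0\le k\le n-1$ matches the a.s.\ bounds $0<S_n<n$ and the fact that a length-$n$ sequence has between $0$ and $n-1$ descents.

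As an alternative one could integrate the Irwin--Hall density of $S_n$ over $[k,k+1]$, or attempt an induction on $n$ matching the Eulerian recurrence $\euler nk=(k+1)\euler{n-1}k+(n-k)\euler{n-1}{k-1}$; but the inductive route forces one to track the joint law of $\floor{S_{n-1}}$ and $\frax{S_{n-1}}$ (these are \emph{not} independent), which the fractional-part transformation above neatly sidesteps.
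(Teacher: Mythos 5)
Your proof is correct, and it takes a genuinely different route from anything in the paper: the paper does not actually prove \refT{T2} at all, but states it as well known with references, remarking only parenthetically (at the end of the alternative proof of \refT{T1}) that one could combine \eqref{j8} with the exponential-variable residue computation of \refL{LP} to rederive Laplace's formula for $\P(S_n\le k)$ and then pass to Eulerian numbers via the standard alternating-sum identity, as in \eqref{w2}--\eqref{w3}. Your argument is instead the classical probabilistic/bijective one -- essentially Tanny's proof, which is among the sources the paper cites: the telescoping identity $\floor{S_n}=\#\set{1\le i\le n: T_{i-1}>T_i}$ with $T_i=\frax{S_i}$ is sound (the boundary events $U_i\in\setoi$ and ties among the $T_j$ all have probability zero), and your ``rotation mod $1$'' lemma is the right crux; note that you correctly condition on the whole vector $(T_1,\dots,T_{i-1})$ rather than on $T_{i-1}$ alone, which is exactly the care needed to get joint independence and not merely pairwise independence of consecutive terms. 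With $T_0=0<T_1$ a.s., the descent count of $(T_1,\dots,T_n)$ is the descent count of a uniformly random permutation of $\setn$, giving \eqref{t2} directly from the definition of $\euler nk$ in \refS{Sprel}. Comparing the two approaches: yours is elementary and self-contained and explains combinatorially \emph{why} Eulerian numbers appear, while the computational route sketched in the paper yields the full distribution function of $S_n$ at arbitrary real points and, through \refL{LP}, extends to general linear combinations of exponential variables -- which is the machinery the paper actually needs for its main result. Your closing caveat is also accurate: $\floor{S_{n-1}}$ and $\frax{S_{n-1}}$ are not independent, and tracking their joint law is precisely the bookkeeping your fractional-part transformation sidesteps.
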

Note that $0\le\floor{S_n}\le n-1$ a.s.
The formula \eqref{t2} can also be interpreted as a formula for the volume
of the slices $\set{x\in\oi^n:k\le x_1+\dots + x_n<k+1}$ of the unit cube.

\begin{remark}
  The distribution of the sum $S_n$, 
i.e, the probability $\P(S_n\le x)$ for all real $x$,
was found already by Laplace
\cite[pp.~257--260]{Laplace},
see also \eg{} 
\citet[XI.7.20]{FellerI} 
and 
\cite[Theorem I.9.1]{FellerII}.
The result \eqref{t2}, which connects this distribution at integer points $x$
and the Eulerian numbers, was found much later; 
see \eg{} \cite{MeyerR} (implicitly), 
\cite{Tanny}, \cite[\S2]{FoataNATO},
\cite{StanleyNATO}, 
or \cite[Chapter 7]{Petersen}.
See also \cite{SJ280} for related results for non-integer $x$.
\end{remark}

Using \refT{T2}, we see that \refT{T1} is equivalent to the identity in
distribution
\begin{align}\label{ws}
  W_n-1 \eqd \floor{S_{n-1}},
\qquad n\ge2.
\end{align}
(Note the shift from $n$ to $n-1$.)
In fact, our proof of \refT{T1} in \refS{Spf} will prove \eqref{ws}
directly, then conclude \eqref{t1} from \refT{T2}.

\begin{ack}
  We thank Dominique Foata and Kyle Petersen
for helpful comments.
\end{ack}

\section{Preliminaries}\label{Sprel}

The Eulerian numbers $\euler nk$ were 
introduced by Euler \cite{E55,E212,E352},
in a method to sum infinite series, see also 
\cite{FoataEuler}.
They later were found to have many combinatorial interpretations.
One common definition is to define $\euler nk$ as the number of permutations
of length $n$ that have exactly $k$ \emph{descents},
see \eg{}
\cite[pp.~267--269]{CM},
\cite[pp.~21--23]{StanleyI},
\cite[\S26.14]{NIST},
\cite[\S1.3]{Petersen}.
They can also be defined by the generating function
\begin{align}
  \sum_{n,k=0}^\infty\euler nk x^k \frac{t^n}{n!} = \frac{1-x}{e^{(x-1)t}-x}.
\end{align}
or the recursion
\begin{align}
  \euler nk = (k+1)\euler{n-1}{k} + (n-k)\euler{n-1}{k-1}, 
\qquad n\ge1,\; k\in\bbZ,
\end{align}
with the initial values $\euler0k=\indic{k=0}$.
We use ``$\indic{c}$ to mean ``1 if $c$ is true, otherwise 0.''
For these and other properties of the Eulerian numbers,
see \eg{} the monograph \cite{Petersen}  
and the further references there.

\begin{remark}
Other notations are also used, in particular $A_{n,k}$. 
Moreover, some authors shift the index $k$ above by 1,
so that $A_{n,k}=\euler{n}{k-1}$. 
\end{remark}

\section{Proofs}\label{Spf}

We first consider a more general situation.

Let $U_1,\dots, U_n$ be any $n$ real numbers (random or not).
As above, let $S_n:=\sumin U_i$ be their sum, 
$\bU:=S_n/n$ their mean, 
and now define
\begin{align}\label{a1}
\WW_n:=\#\set{i\le n: U_i < \bU}.   
\end{align}
Furthermore, 
let $U\xx1\le\dots\le U\xx{n}$ be $U_1,\dots,U_n$ sorted into increasing order,
then let 
\begin{align}\label{jw1}
\gD_i:=U\xx{i+1}-U\xx{i}, 
\qquad 1\le i\le n-1,    
\end{align}
be the gaps between them.

Note that
\begin{align}\label{j1}
  U\xx{i}&=U\xx1+\sum_{j=1}^{i-1}\gD_j,
\qquad 1\le i\le n,
\end{align}
and thus
\begin{align}\label{j2}
S_n &= \sumin U\xx{i}
=nU\xx1+\sumin\sum_{j=1}^{i-1}\gD_j
=nU\xx1+\sum_{j=1}^{n-1}(n-j)\gD_j.
\end{align}

\begin{remark}\label{R<>2}
We use in this section $\WW_n$ rather than $W_n$ in \eqref{wn} for notational
convenience below; as said in \refR{R<>}, this makes no difference for our
original problem. In general, we can interchange $W_n$ and $\WW_n$ by
changing the sign on each $U_i$,
which reverses both the order and signs of
$U\xx1,\dots,U\xx{n}$, and thus changes $\gD_i$ to $\gD_{n-i}$, so the
increments are the same but in opposite order.
\end{remark}

\begin{lemma}\label{Lgen}
  For any real numbers $U_1,\dots,U_n$, we have, for $k=0,\dots,n-1$,
  \begin{align}\label{j3}
\WW_n \le k \iff     
\sum_{j=1}^{k}j\gD_j
-
\sum_{j=k+1}^{n-1}(n-j)\gD_j
\ge0
.\end{align}
\end{lemma}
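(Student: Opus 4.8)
The plan is to reduce \eqref{j3} to a statement about a single order statistic and then verify one algebraic identity. The key observation is that, since $U\xx1\le\dots\le U\xx n$ is sorted, the indices $i$ with $U_i<\bU$ correspond to an initial segment of the sorted values; hence $\WW_n$ is simply the number of sorted values lying strictly below the mean, and
\[
\WW_n\le k \iff U\xx{k+1}\ge\bU
\]
for every $k$ with $0\le k\le n-1$. First I would establish this equivalence directly. If $\WW_n\le k$, then at most $k$ of the values are below $\bU$, so $U\xx{k+1}$ is not among them and $U\xx{k+1}\ge\bU$. Conversely, if $U\xx{k+1}\ge\bU$, then $U\xx m\ge U\xx{k+1}\ge\bU$ for every $m\ge k+1$, so the values strictly below $\bU$ are confined to $U\xx1,\dots,U\xx k$ and thus $\WW_n\le k$.

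Next I would rewrite the right-hand condition as $nU\xx{k+1}-S_n\ge0$ and substitute the formulas \eqref{j1} and \eqref{j2}, which express both $U\xx{k+1}$ and $S_n$ in terms of $U\xx1$ and the gaps $\gD_j$. The terms $nU\xx1$ cancel, leaving
\[
nU\xx{k+1}-S_n
=n\sum_{j=1}^{k}\gD_j-\sum_{j=1}^{n-1}(n-j)\gD_j .
\]
Splitting the last sum at $j=k$ and using $n-(n-j)=j$ on the range $1\le j\le k$ turns the right-hand side into exactly $\sum_{j=1}^{k}j\gD_j-\sum_{j=k+1}^{n-1}(n-j)\gD_j$, the quantity appearing in \eqref{j3}. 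Chaining the two equivalences then yields the lemma.

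The only delicate point, and the nearest thing to an obstacle, is the treatment of possible ties among the $U_i$, since the lemma is asserted for arbitrary reals with no distinctness hypothesis. Here the strict inequality ``$<$'' in the definition \eqref{a1} of $\WW_n$ matches the non-strict ``$\ge0$'' in \eqref{j3} precisely: a value $U\xx{k+1}$ equal to $\bU$ is counted on neither side, and repeated values (vanishing gaps $\gD_j$) cause no trouble, because the initial-segment argument above uses only monotonicity of the sorted sequence. Thus no genericity assumption is needed, and the boundary cases $k=0$ and $k=n-1$ fall out automatically.
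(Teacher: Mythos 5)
Your proposal is correct and follows essentially the same route as the paper: the chain $\WW_n\le k\iff U\xx{k+1}\ge\bU\iff nU\xx{k+1}\ge S_n$, followed by substituting \eqref{j1}--\eqref{j2} and the algebraic identity $n\gD_j-(n-j)\gD_j=j\gD_j$ on the range $j\le k$. The extra care you take with ties and the boundary cases $k=0,\,n-1$ is sound but not a different argument.
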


\begin{proof}
By \eqref{a1} and \eqref{j1}--\eqref{j2},
\begin{align}\label{j4}
  \WW_n\le k&
\iff
U\xx{k+1}\ge\bU
\iff
nU\xx{k+1}\ge S_n
\iff
n\sum_{j=1}^{k}\gD_j
\ge
\sum_{j=1}^{n-1}(n-j)\gD_j
\notag\\&
\iff
\sum_{j=1}^{k}n\gD_j
-
\sum_{j=1}^{n-1}(n-j)\gD_j
\ge0
.\end{align}
Furthermore,
\begin{align}\label{j5}
  \sum_{j=1}^{k}n\gD_j
-
\sum_{j=1}^{n-1}(n-j)\gD_j
=
\sum_{j=1}^{k}j\gD_j
-
\sum_{j=k+1}^{n-1}(n-j)\gD_j,
\end{align}
and thus \eqref{j3} follows.
\end{proof}

\begin{remark}
  In other words,
  \begin{align}
    \WW_n = 
\min\biggset{k\in\set{0,\dots,n-1}
  :\sum_{j=1}^kj\gD_j \ge\sum_{j=k+1}^{n-1}(n-j)\gD_j
}
. \end{align}
\end{remark}

\begin{proof}[Proof of \refT{T1}]
We use the notation above.
  Now $U_1,\dots,U_n$ are independent uniform variables in $\oi$, and thus
  $U\xx1,\dots,U\xx{n}$ are the corresponding order statistics. 
We define also $U\xx0=0$ and $\gD_0:=U\xx1$. (Thus \eqref{jw1} is now also
for $i=0$.)

The  vector $(U\xx1,\dots,U\xx{n})$ is uniformly distributed in the simplex
$\set{0\le u_1\le\dots\le u_n\le1}$. It follows that
$U\xx1/U\xx{n},\dots,U\xx{n-1}/U\xx{n}$ have the same joint distribution as
$n-1$ independent uniform random $\Uoi$ variables, arranged in increasing order.
In particular, using \eqref{j1} and \eqref{j2} (with $n-1$ instead of $n$),
\begin{align}\label{a7}
  S_{n-1}\eqd \sum_{i=1}^{n-1}\frac{U\xx{i}}{U\xx{n}}
=\frac{\sum_{i=1}^{n-1}U\xx{i}}{U\xx{n}}
=\frac{\sum_{j=0}^{n-2}(n-1-j)\gD_j}{\sum_{j=0}^{n-1}\gD_j}
\end{align}
and thus, for $k=0\dots,n-1$,
\begin{align}\label{j8}
\cSni \le k &
\iff
S_{n-1}\le  k
\iff
\sum_{j=0}^{n-1}(n-1-j)\gD_j\le k\sum_{j=0}^{n-1}\gD_j
\notag\\&
\iff 
\sum_{j=0}^{n-1}(j+k+1-n)\gD_j\ge0.
\notag\\&
\iff 
\sum_{i=1}^{k}i\gD_{i+n-1-k}
+\sum_{i=k+1}^{n-1}(i-n)\gD_{i-1-k}\ge0
.\end{align}
Comparing this and \eqref{j3}, we see that the conditions are the same up to
a (cyclic) permutation of the variables $\gD_0,\dots,\gD_{n-1}$.
It is well known, and easy to see, 
that for \iid{} uniform random variables $U_1,\dots,U_n$,
the increments $\gD_0,\dots,\gD_{n-1}$ have an 
\emph{exchangeable} distribution;
thus we obtain from \eqref{j3} and \eqref{j8}, for $k=0,\dots,n-1$,
\begin{align}\label{j9}
\P\bigpar{\WW_n \le k}&
=\P\lrpar{
\sum_{j=1}^{k}j\gD_j
-
\sum_{j=k+1}^{n-1}(n-j)\gD_j
\ge0}
\notag\\&
=\P\lrpar{\sum_{i=1}^{k}i\gD_{i+n-1-k}
+\sum_{i=k+1}^{n-1}(i-n)\gD_{i-1-k}\ge0}
\notag\\&
=\P\bigpar{\cSni \le k}
.\end{align}
Consequently, $\WW_n\eqd\cSni$.
Finally, $W_n\eqd \WW_n$ by \refR{R<>}, and
$\cSni=\fSni+1$ almost surely.
This proves the identity in distribution $W_n\eqd \fSni+1$,
which as remarked in the introduction is equivalent to \refT{T1} by the well
known \refT{T2}.
\end{proof}

We note an alternative proof which calculates the probability of the event
in \eqref{j3} directly, using properties of exponential random variables.
While we regard this proof as more complicated than the proof above, it
might be of independent interest.

\begin{proof}[Alternative proof of \refT{T1}]
As in the proof above, let $U_1,\dots,U_n$ be \iid{} uniform random
variables in $\oi$, and let the order statistics $U\xx i$ and increments
$\gD_i$ be as above.

Let $\xi_1,\xi_2,\dots$ be \iid{} random variables with a standard
exponential density $\xexp(-x)$ for $x>0$.
Let $\zeta_1,\zeta_2,\dots$ be the partial sums $\zeta_k:=\sum_{i=1}^k \xi_i$.
(These are the points of a rate 1 Poisson point process on $\ooo$.)
It is well known that conditioned on $\zeta_{n+1}$,
the points
$\zeta_1,\dots,\zeta_n$ are  distributed as $n$ independent uniformly
distributed points on $[0,\zeta_{n+1}]$ that have been ordered, and thus
$(\zeta_1/\zeta_{n+1},\dots,\zeta_n/\zeta_{n+1})\eqd(U\xx1,\dots,U\xx{n})$.
Hence $(\gD_0,\dots,\gD_{n-1})\eqd(\xi_1/\zeta_{n+1},\dots,\xi_n/\zeta_{n+1})$,
and it follows by \refL{Lgen} and \refR{R<>} that, for $k=0,\dots,n-1$,
\begin{align}\label{w1}
  \P(W_n\le k)&
=\P\lrpar{\sum_{j=1}^{k}j\gD_j-\sum_{j=k+1}^{n-1}(n-j)\gD_j \ge0}
\notag\\&
=\P\lrpar{\sum_{j=1}^{k}j\frac{\xi_{j+1}}{\zeta_{n+1}}
-\sum_{j=k+1}^{n-1}(n-j)\frac{\xi_{n-j+1}}{\zeta_{n+1}} \ge0}
\notag\\&
=\P\lrpar{\sum_{j=1}^{k}j{\xi_{j+1}}
-\sum_{j=k+1}^{n-1}(n-j){\xi_{n-j+1}} \ge0}
.\end{align}
The last probability is calculated by \refL{LP}, with $\ell=n-k-1$,
$a_i=i$ and $b_j=j$ (after reordering), and  thus, by \eqref{lp},
\begin{align}\label{w2}
  \P(W_n\le k)&
=\sum_{m=1}^k m^{n-2}\frac{(-1)^{k-m}m!}{(m-1)!\,(k-m)!\,(m+n-k-1)!}
\notag\\&
=\sum_{m=1}^k m^{n-1}\frac{(-1)^{k-m}}{(k-m)!\,(n-1+m-k)!}
\notag\\&
=\sum_{i=0}^{k-1} (k-i)^{n-1}\frac{(-1)^{i}}{i!\,(n-1-i)!}
\notag\\&
=\frac{1}{(n-1)!}\sum_{i=0}^{k-1} (-1)^{i}\binom{n-1}{i}(k-i)^{n-1}
.\end{align}
Hence, by a simple calculation,
\begin{align}\label{w3}
&  \P(W_n= k)
=   \P(W_n\le k)-   \P(W_n\le k-1)
\notag\\&
=\frac{1}{(n-1)!}\sum_{i=0}^{k-1} (-1)^{i}\binom{n-1}{i}(k-i)^{n-1}
-\frac{1}{(n-1)!}\sum_{i=0}^{k-2} (-1)^{i}\binom{n-1}{i}(k-1-i)^{n-1}
\notag\\&
=\frac{1}{(n-1)!}\sum_{i=0}^{k-1} (-1)^{i}\binom{n}{i}(k-i)^{n-1}
=\frac{1}{(n-1)!}\euler{n-1}{k-1},
\end{align}
where the final inequality is a standard formula for Eulerian numbers, see
\eg{}
\cite[(6.38)]{CM},
\cite[Corollary 1.3]{Petersen}, or 
\cite[26.14.6]{NIST}.

In fact, the \rhs{} of \eqref{w2}  
equals the probability $\P(\ceil{S_{n-1}}\le k)=\P({S_{n-1}}\le k)$
as calculated 
in \eg{} \cite{Laplace} and \cite{FellerII}, and \eqref{w3}
just repeats a calculation in \eg{} \cite{Tanny} or \cite{FoataNATO}.
(We may also use \eqref{j8} and the method above to 
obtain a possibly new proof of this formula for $\P(\ceil{S_{n-1}}\le k)$ 
 and thus of \refT{T2}.)
\end{proof}

\section{Application in (or motivation from) voting theory}\label{Svoting}

One motivation for the study of the random variable $W_n$ comes from voting theory.

Consider an election (of 1 person)
by \emph{Approval Voting} \cite{BramsF}. This means that each voter
\emph{approves} (i.e, votes for) an arbitrary set of candidates, 
and the candidate approved by the most voters wins.
(Ties are broken randomly.)

We are interested in the best strategy of a single voter.
We assume each voter has evaluated all $n$ candidates and therefore assigned
a personal
real ``utility'' $U_i$ to each candidate $i=1,\dots,n$.
The voter does not know the preferences (or strategies) of the other voters,
so from the perspective of this voter, the other votes can be regarded as
random, with some distribution.
We assume also that the voter tries to maximize the expected utility of the 
winner. 

Let us further assume that the voter is completely ignorant of the choices
of other voters, so the assumed probability distribution of the aggregated
votes of all others is symmetric under the group of $n!$ permutations of the $n$ candidates.
Since ties are broken randomly we may simplify the analysis by assuming
that this is done by giving each candidate a (secret) random number of extra
votes $X_i\in(0,1)$ before the election, with $X_1,\dots,X_n$ \iid{} and
uniformly distributed. 
We include these extra votes in our probability model of the
other voters, thus (a.s.)\ eliminating ties.
Let $V_i$ be the number of votes from all other voters 
for candidate $i$, including the $X_i$ extra ones. 
(Thus $V_i$ is a real number.)
The assumed probability distribution of $V_1,\dots,V_n$ is still symmetric.
Hence there is a (presumably small) probability $p$
such that for each pair $(i,j)$,\ 
$V_i$ is the largest
number of votes, \emph{but} $V_i-V_j <1$
so that our voter could swing the outcome.
Let us also assume that the probability that there is more than one such
``swing pair'' $(i,j)$ is neglectibly small -- which is
reasonable if the number of voters is large.
Assume our voter approves a subset $A$ of the candidates.
Then let $I_i$ denote the indicator-function $\indic{i\in A}$ that candidate $i$ is approved.
Then the expected change that this vote 
causes to the utility of the winner is, using the notations $S_n$ and $\bU$
as in \refS{Spf},
\begin{align}\label{av1}
  \sum_{i\neq j}pI_j(1-I_i)(U_j-U_i)
&=
 p \sum_{i,j=1}^nI_j(1-I_i)(U_j-U_i)
= p \sum_{i,j=1}^nI_j(U_j-U_i)
\notag\\&
= p n\sum_{j=1}^nI_jU_j
- p \sum_{j=1}^nI_jS_n
= p n\sum_{j=1}^nI_j(U_j-\bU).
\end{align}
Since the objective is to maximize the expected utility of the winner, 
or equivalently to maximize \eqref{av1}, the optimal strategy is to 
choose $I_j:=\indic{U_j>\bU}$; in other words:
$$\text{\vbox{
\emph{The voter should approve each candidate with a utility exceeding the
average utility
(all utilities as reckoned by that voter)}.
}}$$
Note that this argument
does not depend on any assumptions (beyond its symmetry) about the shape of the
probability distribution for the other voters, 
Hence this strategy is optimal if we are ignorant about the other voters;
and we do not have to construct any specific probability distribution describing
their voting.

Consider now a number of voters such that each of them follows this
optimal strategy, where the voters make their own assignments of utilities.
A simple model is that each voter’s utilities are $n$ i.i.d.\ random numbers uniform in $\oi$.
Then the number of candidates that the voter approves is given by $W_n$ in
\eqref{wn}, whereupon \refT{T1} shows that
$$\text{
\emph{The number of approved candidates has an Eulerian distribution}.
}$$

A more realistic model than ``total ignorance about the other voters''
might be to assume that there is a certain ``good'' subset of the candidates who have
a reasonable chance to win, and whom almost all voters regard as better than the complement ``bad''
candidate-subset.  If so, then we'd predict that the number of approved candidates has
distribution approximately Eulerian among the good-subset only.

Also, much more generally:  a voter \emph{not} wholly ignorant
of all the other voters, 
typically\footnote{
Actually this strategy is not always optimal.
A counterexample:
Suppose there are three candidates $A$, $B$, $C$.
My utilities are 10, 6, 0.
The other $2m$ voters
are very polarized, with exactly half approving
$A$ but not $B$, and half approving $B$ but not $A$.
But they all toss fair coins
to decide whether to approve $C$.
Then $A$, $B$, $C$'s win-chances are (in the $m\to\infty$ limit)
$1/4$, $1/4$, $1/2$, so my
expected utility is 4.
Hence the recommended strategy approves ${A,B}$,
yielding expected utility still 4.
But here a better strategy is approving only $A$,
yielding win probabilities $1/2$, $0$, $1/2$
so that my expected utility is 5.
More of this sort of thing: \cite{RVstrat2},\cite{GibbSatComplete}.
}
should approve all candidates whose utility (to her)
exceeds the expected utility (to her) of the winner. 
To determine the latter,  the voter needs
to estimate the win-probabilities for all
the candidates; and the more knowledge she has about how the other voters
are likely to behave, the better such estimates she can make.
We have discussed above only the special case where the voter has zero knowledge about
the other voters; but many other special cases also can be treated, including with the aid of
\refL{LP}, e.g. see \cite{RangeVoting}.

It is interesting to study the distribution of the
random variable $W_n$ also for other
distributions of $(U_1,\dots,U_n)$. We give here a quite different case
where the distribution can be found explicitly.

\begin{example}
We may start with any positive random variables $\gD_1,\dots,\gD_{n-1}$.
We then define $U\xx1,\dots,U\xx{n}$ by \eqref{j1}, with, say, $U\xx1=0$,
and finally let $U_1,\dots,U_n$ be a random permutation of
$U\xx1,\dots,U\xx{n}$. This yields random utilities with the increments
$\gD_1,\dots,\gD_{n-1}$.

Consider the case when the increments $\gD_i$ are
\iid{} random variables with a positive (strictly)
stable distribution of index $\ga\in(0,1)$;
thus their Laplace transform is $\E e^{-t\gD_i}=e^{-ct^\ga}$ for $t\ge0$ and
a constant $c>0$ (which is unimportant by rescaling).
We find the distribution of $W_n$ by using \refL{Lgen}, noting that
$W_n\eqd\WW_n$ by the sign-change argument in \refR{R<>2}. 
In this case
\begin{align}\label{er1}
  \sum_{j=1}^k j\gD_j \eqd s_k^{1/\ga} \gD_1,
\end{align}
where
\begin{align}\label{er2}
  s_k:=\sum_{j=1}^k j^\ga.
\end{align}
Similarly,
\begin{align}\label{er3}
\sum_{j=k+1}^{n-1}(n-j)\gD_j
= \sum_{i=1}^{n-k-1}i\gD_{n-i}
\eqd s_{n-k-1}^{1/\ga} \gD_1.
\end{align}
Furthermore, the two sums in \eqref{j3} are independent, 
and it follows that, for $0\le k\le n-1$,
\begin{align}\label{er4}
  \P\bigpar{W_n\le k}
=\P\bigpar{s_{k}^{1/\ga} \gD_1 \ge s_{n-k-1}^{1/\ga} \gD_2}
=\P\lrpar{\frac{\gD_2}{ \gD_1} \le \Bigpar{\frac{s_{k}}{s_{n-k-1}}}^{1/\ga} }
.\end{align}

In the special case $\ga=1/2$ (when the distribution of $\gD_i$ is known as
the L\'evy distribution), we can take $\gD_i=Z_i\qww$, where $Z_1,Z_2,\dots$
are \iid{} standard normal random variables.
We thus obtain
\begin{align}\label{er5}
  \P\bigpar{W_n\le k}&
=\P\lrpar{\Bigpar{\frac{Z_2}{Z_1}}\qww 
\le \Bigpar{\frac{s_{k}}{s_{n-k-1}}}^2 }
=\P\lrpar{\frac{|Z_1|}{|Z_2|} \le\frac{s_{k}}{s_{n-k-1}}}
\notag\\&
=\frac{2}{\pi}\arctan\frac{s_{k}}{s_{n-k-1}}
=\frac{2}{\pi}\arctan\frac{\sum_{j=1}^k\sqrt{j}}{\sum_{j=1}^{n-k-1}\sqrt{j}}
,\end{align}
since the argument of the complex number $|Z_2|+\ii|Z_1|$ is uniformly
distributed on $[0,\frac{\pi}2]$.

We may note that it follows from \eqref{er5} that as \ntoo,  $W_n/n$
converges in distribution to a random variable $\WWW$ with
\begin{align}
  \P(\WWW\le x) = \frac{2}{\pi}\arctan\Bigpar{\frac{x}{1-x}}^{3/2},
\qquad 0\le x \le1,
\end{align}
and thus density function
\begin{align}
  f_{\WWW}(x) =\frac{3}{\pi}\cdot \frac{\sqrt{x(1-x)}}{x^3+(1-x)^3}
,\qquad 0<x<1.
\end{align}
\end{example}

\begin{example}
  Another interesting case is when the numbers $U_1,\dots,U_n$ are
  \iid{} standard normal random variables.
Let $Y_i:=U_i-\bU$; then $(Y_1,\dots,Y_n)$ has a normal distribution with
mean 0 and covariances $\Var(Y_i)=\frac{n-1}n$,
$\Cov(Y_i,Y_j)=-\frac{1}{n}$, $i\neq j$, and thus correlations 
$\Corr(Y_i,Y_j)=-\frac{1}{n-1}$, $i\neq j$.
Note that the vector $(Y_1,\dots,Y_n)$ lies in the $(n-1)$-dimensional
subspace $\cV:=\set{(x_i)_1^n:\sum_1^n x_i=0}$ of $\bbR^n$.

Let again $W_n$ be given by \eqref{wn}. By symmetry, for $1\le k\le n-1$,
\begin{align}\label{feb1}
  \P(W_n=k)=\binom nk \P\bigpar{Y_1,\dots,Y_k>0 > Y_{k+1},\dots,Y_n}.
\end{align}
For each $i=1,\dots n$, the intersection of the hyperplane $x_i=0$ with
$\cV$ defines a hyperplane $H_i$ in $\cV$. Similarly, we define a half-space
$A_i$ in $\cV$ as $\cV$ intersected with $\set{x_i>0}$ if $1\le i\le k$, 
and $\set{x_i<0}$ if $k+1\le i\le n$. The intersection 
$C_k:=\bigcap_{i=1}^n A_i$ of these half-spaces is a polyhedral cone,
bounded by (parts of) the hyperplanes $H_i$. 
Color these hyperplanes \emph{red} for $1\le i\le k$ and \emph{blue}
for $k+1\le i \le n$. 
The interior angle between $H_i$ and $H_j$ ($i\neq j$) is 
$\arcsec(n-1)$ if $i$ and $j$ have the same color
and $\arcsec(1-n)$ otherwise.

By intersecting the cone $C_k$ and the $(n-2)$-dimensional
unit sphere in $\cV$, we obtain a spherical polyhedron $T_k$, and if $f(n,k)$ is
its volume, then \eqref{feb1} yields
\begin{align}
  P(W_n=k)=\binom nk \frac{f(n,k)}{\Omega(n-1)}
=\binom nk \frac{f(n,k)}{2 \pi^{(n-1)/2}/\Gamma((n-1)/2)}
.\end{align}
where $\Omega(n-1)$ is the surface area of the unit sphere in $R^{n-1}$.

For $n=4$ and $k=1$,  
$T_k$ is a spherical triangle with all three angles 
$\arcsec(3)$, and thus area $f(4,1)=3 \arcsec(3)-\pi$, yielding
\begin{align}
  \P(W_4=1)=4\frac{3\arcsec(3)-\pi}{4\pi}
=\frac{3\arcsec(3)}{\pi}-1
=\frac{\arccos(23/27)}{\pi}
\doteq 0.175479656
\end{align}
By symmetry, $\P(W_4=3)=\P(W_4=1)\doteq 0.175479656$, and thus
$\P(W_4=2)\doteq 1-2\P(W_4=1)=3-6\arcsec(3)/\pi
\doteq 0.649040689.
$
These results agree excellently with Monte Carlo experiments.
This distribution, not surprisingly, differs from the Eulerian
distribution found in \refT{T1} for \emph{uniform} random variables.

For $n=5$, we similarly can obtain
$\P(W_5=1)=\P(W_5=4)$ from the volume of a
spherical tetrahedron with
all face-pair dihedral angles $\arcsec(4)$.
Computing this volume using Murakami's formula
\cite{Murakami}, we find
$0.19314200684738698696896128783\dots$ 
yielding
$\P(W_5=1)=\P(W_5=4)=0.04892344186208439057262451667\dots$ 
agreeing
with $0.048923 \pm 0.000003$ from Monte Carlo.
(Warning: some implementations of Murakami's formula
trigger bugs in Mathematica${}^{\rm TM}$; and
Murakami's formula has severe numerical problems
for regular tetrahedra with dihedral angle
$\pi-\epsilon$ for small enough $\epsilon> 0$.)
Surprisingly, we noticed that this volume apparently equals
$\arccos(1-3/4^3)\pi/5$, which Murakami [personal communication]
then confirmed to hundreds of decimal places accuracy -- but
it remains unproven.  If so, then
$\P(W_5=1)=\P(W_5=4)=\arccos(61/64)/(2\pi)$.
\end{example}

\appendix
\section{A probability for exponential variables}\label{App}

We give here a calculation for linear combinations of
exponential variables used in the alternative proof in \refS{Spf}.
We give it in a general form. The assumption that $a_1,\dots,a_\ell$ are
distinct is necessary for the formula as stated here to make sense; however, 
the proof extends to cases with repeated $a_i$, but then the residue
calculations will be more complicated.

\begin{lemma}\label{LP}
  Let $\xi_1,\xi_2,\dots$ and $\eta_1,\eta_2,\dots$ be \iid{} standard-exponential distributed random
  variables, i.e. probability density $exp(-x)$
Let $a_1,\dots,a_k$ and $b_1,\dots,b_\ell$ be positive real numbers, with
$k,\ell\ge0$, $k+\ell\ge1$, and
$a_1,\dots,a_k$ distinct.
Then
\begin{align}\label{lp}
  \P\lrpar{\sum_{i=1}^k a_i\xi_i \ge \sum_{j=1}^\ell b_j\eta_j}
=\sum_{m=1}^k \frac{a_m^{k+\ell-1}}
{\prod_{i\neq m}\xpar{a_m-a_i}\prod_{j=1}^\ell\xpar{a_m+b_j}}
.\end{align}
\end{lemma}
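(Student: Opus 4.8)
The plan is to reduce the statement to a computation about two independent nonnegative random variables and then evaluate it by a partial-fraction (equivalently, residue) calculation. Write $X:=\sum_{i=1}^k a_i\xi_i$ and $Y:=\sum_{j=1}^\ell b_j\eta_j$; these are independent, and the quantity sought is $\P(X\ge Y)$. The whole argument rests on the factorization of Laplace transforms: since $\E e^{-s a_i\xi_i}=(1+a_i s)^{-1}$ and $\E e^{-s b_j\eta_j}=(1+b_j s)^{-1}$ for $s\ge0$, we have $\E e^{-sX}=\prod_{i=1}^k(1+a_is)^{-1}$ and $\E e^{-sY}=\prod_{j=1}^\ell(1+b_js)^{-1}$. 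Equivalently, each $a_i\xi_i$ is exponential with rate $1/a_i$, and because the $a_i$ are distinct these rates are distinct, so $X$ is a \emph{hypoexponential} variable.

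First I would write down the density of $X$ explicitly. Expanding $\prod_{i=1}^k(1+a_is)^{-1}$ into partial fractions in $s$ (distinctness of the $a_i$ guarantees only simple poles) and inverting gives
\[
f_X(x)=\sum_{m=1}^k \frac{a_m^{k-2}}{\prod_{i\neq m}(a_m-a_i)}\,e^{-x/a_m},\qquad x>0,
\]
and integrating from $y$ to $\infty$ yields the tail $\bar F_X(y)=\P(X\ge y)=\sum_{m=1}^k a_m^{k-1}\big/\prod_{i\neq m}(a_m-a_i)\cdot e^{-y/a_m}$. Conditioning on $Y$ and using independence, $\P(X\ge Y)=\E\,\bar F_X(Y)=\sum_{m=1}^k a_m^{k-1}\big/\prod_{i\neq m}(a_m-a_i)\cdot\E\,e^{-Y/a_m}$. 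The remaining expectation is simply the Laplace transform of $Y$ at $s=1/a_m$, namely $\E\,e^{-Y/a_m}=\prod_{j=1}^\ell(1+b_j/a_m)^{-1}=a_m^\ell\big/\prod_{j=1}^\ell(a_m+b_j)$. Substituting this and collecting the powers $a_m^{k-1}\cdot a_m^{\ell}=a_m^{k+\ell-1}$ produces exactly the \rhs{} of \eqref{lp}. (The boundary cases $\ell=0$, giving $\P(X\ge0)=1$, and $k=0$, giving the empty sum $0$, come out automatically.)

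An equivalent and more uniform route, which I expect is the one alluded to in the remark about residues, is to represent the indicator by a Bromwich integral $\indic{u>0}=\frac1{2\pi\ii}\int_{c-\ii\infty}^{c+\ii\infty} s^{-1}e^{su}\,ds$ with $0<c<1/\max_i a_i$, apply it to $u=X-Y$, take expectations inside, and obtain
\[
\P(X\ge Y)=\frac1{2\pi\ii}\int_{c-\ii\infty}^{c+\ii\infty} \frac{1}{s}\prod_{i=1}^k\frac{1}{1-a_is}\prod_{j=1}^\ell\frac{1}{1+b_js}\,ds,
\]
after which closing the contour to the right and summing the residues at the simple poles $s=1/a_m$ gives \eqref{lp} at once. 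In both versions the genuine content is the partial-fraction/residue bookkeeping, so the main obstacle is purely computational: establishing the hypoexponential density (or, in the contour version, checking that the integrand decays like $|s|^{-(k+\ell+1)}$ so the arc contribution vanishes and the contour may be closed, and pinning down the clockwise orientation sign), and then correctly tracking the powers of $a_m$ through the algebra. The contour picture also makes transparent the closing remark that repeated $a_i$ complicate matters only by turning simple poles into higher-order ones.
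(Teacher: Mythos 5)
Your argument is correct, and your first route is a slightly different (and somewhat more elementary) packaging of the same underlying residue computation. The paper works with the two-sided variable $X-Y$ directly: it writes its moment generating function $\Psi(z)=\prod_i(1-a_iz)^{-1}\prod_j(1+b_jz)^{-1}$, recovers the density on $(0,\infty)$ by Fourier inversion along the imaginary axis, shifts the contour to the right past the simple poles $1/a_m$ (using decay estimates on $\Psi$ to kill the shifted integral), and then integrates the resulting density over $x>0$. You instead split the problem: you invert only the one-sided Laplace transform of $X=\sum_i a_i\xi_i$ via partial fractions to get the hypoexponential density and tail $\P(X\ge y)=\sum_m a_m^{k-1}e^{-y/a_m}\big/\prod_{i\neq m}(a_m-a_i)$, and then condition on $Y$, so that the $b_j$ enter only through the elementary evaluation $\E e^{-Y/a_m}=a_m^{\ell}/\prod_j(a_m+b_j)$. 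This buys you a proof that needs no two-sided inversion formula and no contour-shifting estimates --- only the standard partial-fraction identity for distinct $a_i$ and Fubini for nonnegative integrands --- at the cost of treating $X$ and $Y$ asymmetrically; the algebra (residues at $1/a_m$, or equivalently at $s=-1/a_m$ in your variable) is identical, and your power count $a_m^{k-1}\cdot a_m^{\ell}=a_m^{k+\ell-1}$ checks out. Your second, Bromwich-integral route is essentially the paper's proof; if you pursue that version you should note that the indicator representation $\indic{u>0}=\frac1{2\pi\ii}\int s^{-1}e^{su}\,\ddx s$ is only conditionally convergent, so the interchange with expectation needs a word of justification (the paper sidesteps this by inverting the absolutely integrable characteristic function instead, which is why it assumes $k+\ell\ge2$ there and treats $(k,\ell)=(1,0),(0,1)$ separately).
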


\begin{proof}
The cases $(k,\ell)=(0,1)$ or $(1,0)$ are trivial, so we may assume
$k+\ell\ge2$. 

  Let $X:=\sum_{i=1}^k a_i\xi_i - \sum_{j=1}^\ell b_j\eta_j$.
Then $X$ has the \mgf{}
\begin{align}\label{Psi}
  \E e^{zX} = \prod_{i=1}^k\frac{1}{1-a_iz}\prod_{j=1}^\ell\frac{1}{1+b_jz}
=:\Psi(z),
\end{align}
for all complex $z$ with $|\Re z|$ sufficiently small.
In particular, the \chf{} of $X$ is $\gf(t)=\Psi(\ii t)$, $-\infty<t<\infty$.

We let \eqref{Psi} define $\Psi(z)$ in the entire complex plane $\bbC$,
and note that $\Psi(z)$ is a  meromorphic function with poles at $1/a_i$ and
$-1/b_j$. We have the estimates, for some constants $C$ and $A$,
\begin{align}\label{Psi1}
  |\Psi(z)|&\le C |\Im z|^{-k-\ell} , &&\hskip-6em z\in\bbC,
\\\label{Psi2}
  |\Psi(z)|&\le C|z|^{-k-\ell} ,  &&\hskip-6em |z|\ge A
.\end{align}

Since $k+\ell\ge2$, \eqref{Psi1} implies that 
the \chf{} is integrable, and $X$ has a continuous
density function given by
\begin{align}\label{aam1}
  f(x) = \frac{1}{2\pi}\intoooo e^{-\ii xt}\gf(t)\dd t
= \frac{1}{2\pi\ii}\int_{-\infty\ii}^{\infty\ii} e^{- xz}\Psi(z)\dd z,
\qquad x\in\bbR,
\end{align}
where the last integral is along the imaginary axis.

Let $x>0$. We move the line of integration to $\Re z= r$, using the estimate
\eqref{Psi1} and Cauchy's residue theorem; we then let $r\to\infty$, which
makes the integral tend to 0 by \eqref{Psi1}--\eqref{Psi2}, and obtain
\begin{align}\label{aam2}
  f(x)=
-\sum_{m=1}^k \Res_{z=1/a_m}\bigpar{e^{-xz}\Psi(z)}
= \sum_{m=1}^k e^{-x/a_m}\frac{1}{a_m}
\prod_{i\neq m}\frac{1}{1-a_i/a_m}\prod_{j=1}^\ell\frac{1}{1+b_j/a_m}.
\end{align}
(This is where we use the assumption that $a_1,\dots,a_m$ are distinct,
which means that the poles $1/a_m$ are simple.)
Integrating \eqref{aam2} yields
\begin{align}\label{aam3}
\P(X\ge0)=\intoo  f(x)\dd x
= \sum_{m=1}^k 
\prod_{i\neq m}\frac{1}{1-a_i/a_m}\prod_{j=1}^\ell\frac{1}{1+b_j/a_m},
\end{align}
which is equivalent to \eqref{lp}.
\end{proof}

\newcommand\AAP{\emph{Adv. Appl. Probab.} }
\newcommand\JAP{\emph{J. Appl. Probab.} }
\newcommand\JAMS{\emph{J. \AMS} }
\newcommand\MAMS{\emph{Memoirs \AMS} }
\newcommand\PAMS{\emph{Proc. \AMS} }
\newcommand\TAMS{\emph{Trans. \AMS} }
\newcommand\AnnMS{\emph{Ann. Math. Statist.} }
\newcommand\AnnPr{\emph{Ann. Probab.} }
\newcommand\CPC{\emph{Combin. Probab. Comput.} }
\newcommand\JMAA{\emph{J. Math. Anal. Appl.} }
\newcommand\RSA{\emph{Random Structures Algorithms} }
\newcommand\DMTCS{\jour{Discr. Math. Theor. Comput. Sci.} }

\newcommand\AMS{Amer. Math. Soc.}
\newcommand\Springer{Springer-Verlag}
\newcommand\Wiley{Wiley}

\newcommand\vol{\textbf}
\newcommand\jour{\emph}
\newcommand\book{\emph}
\newcommand\inbook{\emph}
\def\no#1#2,{\unskip#2, no. #1,} 
\newcommand\toappear{\unskip, to appear}

\newcommand\arxiv[1]{\texttt{arXiv}:#1}
\newcommand\arXiv{\arxiv}

\newcommand\xand{and }
\renewcommand\xand{\& }

\def\nobibitem#1\par{}


\begin{thebibliography}{99}

\bibitem{BramsF}
Steven Brams and Peter C.  Fishburn:
\emph{Approval Voting},
2nd ed, Springer, New York, 2007.
  
\bibitem{CM}
Ronald L. Graham, Donald E. Knuth and Oren Patashnik.
\emph{Concrete Mathematics}.
2nd ed, Addison-Wesley,  Reading, MA, 1994.

\bibitem{E55}
Leonhard Euler. 
Methodus universalis series summandi ulterius promota.
\emph{Commentarii academiae scientiarum imperialis Petropolitanae}
\vol8 (1736), 
St. Petersburg, 1741, 
pp. 147--158.
\url{http://www.math.dartmouth.edu/~euler/pages/E055.html}


\bibitem{E212} 
Leonhard Euler.  
\emph{Institutiones calculi differentialis cum eius usu in analysi finitorum ac
doctrina serierum. Vol I}.
St. Petersburg, 1755. 
\url{http://www.math.dartmouth.edu/~euler/pages/E212.html}

\bibitem{E352}
L. Euler. 
Remarques sur un beau rapport entre les s\'eries des puissances tant directes
que r\'eciproques, 
\emph{Memoires de l'Acad\'emie Royale des Sciences et des Belles-Lettres}
\vol{17}, in
\emph{Histoire de l'Acad\'emie Royale des Sciences et des Belles-Lettres de
  Berlin 1761}, 
Berlin, Haude et Spener, 1768, pp. 83--106. 
(Paper read to the academy in 1749.)
\url{http://www.math.dartmouth.edu/~euler/pages/E352.html}

\bibitem[Feller(1957)]{FellerI}
William Feller.
\emph{An Introduction to Probability Theory and its Applications,
Volume I}. 2nd ed, Wiley, New York, 1957.

\bibitem[Feller(1971)]{FellerII}
William Feller. 
\emph{An Introduction to Probability Theory and its Applications,
Volume II}. 2nd ed, Wiley, New York, 1971.

\bibitem{FoataNATO}
Dominique Foata.
Distributions eul\'eriennes et mahoniennes sur le groupe des permutations. 
\emph{Higher Combinatorics (Proc. NATO Advanced Study Institute, Berlin, 1976)},
Reidel, Dordrecht, 1977,   pp. 27--49.


\bibitem{Foata}
Dominique Foata.
Eulerian polynomials: from Euler's time to the present. 
\emph{The Legacy of Alladi Ramakrishnan in the Mathematical Sciences}, 
253--273, Springer, New York, 2010.

\bibitem{FoataEuler}
Dominique Foata.
Les polyn{\^o}mes eul{\'e}riens, d'Euler \`a Carlitz.  
In
\emph{Leonhard Euler.
Math{\'e}maticien, physicien et th{\'e}oricien de la musique},
eds. Xavier Hascher and Athanase Papadopoulos,
413--432,
CNRS {\'E}ditions, Paris, 2015.

\bibitem{SJ280}
Svante Janson.
Euler--Frobenius numbers and rounding.
\emph{Online J. Anal. Comb.} No. 8 (2013), Paper 5, 34 pp.

\bibitem[Laplace(1820)]{Laplace}
Pierre-Simon de Laplace. 
\emph{Th\'eorie analytique des probabilit\'es}.
3rd ed, Courcier, Paris 1820;
\emph{{\OE}uvres completes de Laplace, vol. 7},
Gauthier-Villars, Paris, 1886.
\url{http://gallica.bnf.fr/ark:/12148/bpt6k775950}

\bibitem[Meyer and von Randow(1971)]{MeyerR}
Werner Meyer and R.  von Randow.
Ein W\"urfelschnittproblem und Bernoullische Zahlen.
\emph{Math. Ann.} \vol{193} (1971), 315--321.

\bibitem[Murakami(2012)]{Murakami}
Jun Murakami. 
Volume formulas for a spherical tetrahedron. 
\emph{Proc. Amer. Math. Soc.} \vol{140} (2012), no. 9, 3289--3295.


\bibitem[Petersen(2015)]{Petersen}
T. Kyle Petersen.
\emph{Eulerian Numbers}.
Birkhäuser/Springer, New York, 2015. 

\bibitem[Stanley(1977)]{StanleyNATO}
Richard P. Stanley.
Eulerian partitions of a unit hypercube.
\emph{Higher Combinatorics (Proc. NATO Advanced Study Institute, Berlin, 1976)},
Reidel, Dordrecht, 1977,   p. 49.


\bibitem[Stanley(1997)]{StanleyI}
Richard P. Stanley.
\emph{Enumerative Combinatorics,
Volume I}.
Cambridge Univ. Press, Cambridge, 1997.

\bibitem{Tanny}
S. Tanny. 
A probabilistic interpretation of Eulerian numbers. 
\emph{Duke Math. J.} \vol{40} (1973), 717--722.
Corrigenda,
\emph{ibid} \vol{41} (1974), 689. 

\bibitem{NIST}
\emph{NIST Handbook of Mathematical Functions}. 
Edited by Frank W. J. Olver, Daniel W. Lozier, Ronald F. Boisvert and
Charles W. Clark. 
Cambridge Univ. Press, 2010. \\
Also available as 
\emph{NIST Digital Library of Mathematical Functions},
\url{http://dlmf.nist.gov/}

\bibitem{RVstrat2}
Warren D. Smith.
Examples in which best Range Voting strategy is "dishonest" approval-style.
\url{http://rangevoting.org/RVstrat2.html}.

\bibitem{GibbSatComplete}
Warren D. Smith.
Completion of Gibbard-Satterthwaite impossibility theorem; range voting and
voter honesty,
2006.
\url{http://rangevoting.org/WarrenSmithPages/homepage/newgibbsat.pdf} 

\bibitem{RangeVoting}
Warren D. Smith.
How many candidates should a voter approve?
2022.
\url{http://www.rangevoting.org/HowManyApprove.html}





\end{thebibliography}
\end{document}